\newtheorem{theorem}{Theorem}[section]
\newtheorem{lemma}[theorem]{Lemma}
\newtheorem{question}[theorem]{Question}
\DeclareMathOperator{\stab}{Stab}
\begin{document}

\title[]{The largest $(k, \ell)$-sum-free sets\\ in compact abelian groups} \keywords{Sum-free set, compact abelian group, Kneser's Theorem, additive combinatorics, profinite group.}
\subjclass[2010]{11B13, 22A05.}

\author[]{Noah Kravitz}
\address[]{Grace Hopper College, Yale University, New Haven, CT 06510, USA}
\email{noah.kravitz@yale.edu}

\begin{abstract} 
A subset $A$ of a finite abelian group is called $(k,\ell)$-sum-free if $kA \cap \ell A=\emptyset.$  In this paper, we extend this concept to compact abelian groups and study the question of how large a measurable $(k,\ell)$-sum-free set can be.  For integers $1 \leq k <\ell$ and a compact abelian group $G$, let $$\lambda_{k,\ell}(G)=\sup\{ \mu(A): kA \cap \ell A =\emptyset \}$$ be the maximum possible size of a $(k,\ell)$-sum-free subset of $G$.  We prove that if $G=\mathbb{I} \times M$, where $\mathbb{I}$ is the identity component of $G$, then $$\lambda_{k, \ell}(G)=\max \left\{ \lambda_{k, \ell}(M), \lambda_{k, \ell}(\mathbb{I}) \right\}.$$  Moreover,  if $\mathbb{I}$ is nontrivial, then $\lambda_{k,\ell}(\mathbb{I})=\frac{1}{k+\ell}$.  Finally, we discuss how this problem motivates a new framework for studying $(k,\ell)$-sum-free sets in finite groups.
\end{abstract}
\maketitle

\section{Introduction}

The \textit{Minkowski sum} of two subsets $A,B$ of an additive abelian group $G$ is $$A+B=\{a+b:a \in A, b \in B\}.$$  When $G$ is finite, a natural question is how large a subset $A \subset G$ can be \textit{sum-free}, i.e., can satisfy $(A+A) \cap A=\emptyset$.  In other words, $A$ is sum-free if $x+y=z$ has no solutions for $x,y,z \in A$.  Early progress on this question for cyclic groups appears in the work of Diananda and Yap \cite{dianandayap} and Wallis, Street, and Wallis \cite{wallisstreetwallis}.  In 2005, Green and Ruzsa \cite{greenruzsa} completely solved this problem for abelian groups.  Let $\lambda_{1,2}(G)$ denote the maximum density of a sum-free subset of $G$.

\begin{theorem}[Green and Ruzsa 2005]\label{thm:greenruzsa}
For any finite abelian group $G$ with exponent $\exp(G)$, we have
$$\lambda_{1,2}(G)=\max_{d|\exp(G)} \left\{ \left\lceil \frac{d-1}{3} \right\rceil \cdot \frac{1}{d} \right\}.$$
In particular, $\frac{2}{7}\leq \lambda_{1,2}(G)\leq \frac{1}{2}$, both bounds of which are sharp.
\end{theorem}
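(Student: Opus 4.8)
The plan is to prove the two inequalities separately: the lower bound by an explicit family of constructions, and the upper bound --- the substantive part --- by induction on $|G|$ using Kneser's theorem.

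For the lower bound, fix a divisor $d \mid \exp(G)$. A finite abelian group contains an element of order $\exp(G)$, and the cyclic subgroup it generates is a direct summand, so $\mathbb{Z}/d\mathbb{Z}$ is a quotient of $G$; fix a surjection $\pi\colon G \to \mathbb{Z}/d\mathbb{Z}$. The ``middle third'' set $I_d \subseteq \mathbb{Z}/d\mathbb{Z}$, defined as the image of $\{\, m \in \mathbb{Z} : d/3 < m \le 2d/3 \,\}$, is sum-free and has size $\lceil (d-1)/3\rceil$; since sum-freeness pulls back along homomorphisms, $\pi^{-1}(I_d)$ is a sum-free subset of $G$ of density $|I_d|/d = \lceil (d-1)/3\rceil/d$. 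Optimizing over $d$ gives $\lambda_{1,2}(G) \ge \max_{d \mid \exp(G)} \lceil(d-1)/3\rceil/d$. The two-sided estimate is a short arithmetic check once the exact formula is known: every term satisfies $\lceil(d-1)/3\rceil/d \le 1/2$, with equality only at $d = 2$; and every integer $n > 1$ has a divisor $d$ with $\lceil(d-1)/3\rceil/d \ge 2/7$ --- take $d = 2$ if $n$ is even, $d = 3$ if $3 \mid n$, $d$ the least prime factor $\equiv 2 \pmod 3$ if one exists, and otherwise $d$ the least prime factor of $n$, which is then $\ge 7$ --- so $2/7$ is attained exactly when $\exp(G) = 7$ and $1/2$ exactly when $2 \mid \exp(G)$.

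For the upper bound I argue by induction on $|G|$. Let $A$ be a sum-free subset realizing $\lambda_{1,2}(G)$ and put $H = \stab(A+A)$, noting that $A+A$ is a union of $H$-cosets. If $H \ne \{0\}$, then the thickened set $A + H$ is still sum-free --- if $a + h \in A+A$ with $a \in A$ and $h \in H$, then $a = (a+h) - h \in A+A$ by $H$-invariance, contradicting $A \cap (A+A) = \emptyset$ --- so $A+H$ projects to a sum-free subset of $G/H$ of density $|A+H|/|G| \ge \mu(A)$; by the inductive hypothesis and $\exp(G/H) \mid \exp(G)$ we get $\mu(A) \le \lambda_{1,2}(G/H) = \max_{d \mid \exp(G/H)} \lceil(d-1)/3\rceil/d \le \max_{d \mid \exp(G)} \lceil(d-1)/3\rceil/d$. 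If $H = \{0\}$, then Kneser's theorem gives $|A+A| \ge 2|A| - 1$, which with $A \cap (A+A) = \emptyset$ (so $|A| + |A+A| \le |G|$) yields $|A| \le \lceil(|G|-1)/3\rceil$; when $G$ is cyclic this is exactly the bound from the $d = \exp(G)$ term, and the induction closes.

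The remaining case --- $\stab(A+A) = \{0\}$ with $G$ \emph{not} cyclic --- is the main obstacle. Here $|G|$ strictly exceeds $\exp(G)$, so the crude bound $\mu(A) \le \lceil(|G|-1)/3\rceil/|G|$ (which is close to $1/3$) can be far above the target --- for instance the theorem forces $\lambda_{1,2}((\mathbb{Z}/7\mathbb{Z})^2) = 2/7$. To close the gap one must exploit the structure of sets with small sumset: since $|A+A|$ is small relative to $|G|$ whenever $\mu(A)$ is large, the near-equality regime of Kneser's theorem --- via a structure theorem in the spirit of Kemperman and Vosper --- should force $A$, up to a set of bounded size, either to fill out a coset of a proper subgroup (which makes $\stab(A+A)$ nontrivial, a contradiction) or to agree with $\pi^{-1}(P)$ for a surjection $\pi\colon G \to \mathbb{Z}/d\mathbb{Z}$ with $d \mid \exp(G)$ and a sum-free $P \subseteq \mathbb{Z}/d\mathbb{Z}$, in which case the already-established cyclic case bounds $|P|$ by $\lceil(d-1)/3\rceil$ and gives $\mu(A) \le |P|/d \le \lceil(d-1)/3\rceil/d$. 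Making this structural reduction precise --- and in particular ruling out ``higher-rank'' sum-free sets whose density exceeds every $\lceil(d-1)/3\rceil/d$ --- is the technical heart of the argument and the step I expect to be hardest.
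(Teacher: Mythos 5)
First, a point of comparison: the paper does not prove Theorem \ref{thm:greenruzsa} at all --- it is quoted as a known result of Green and Ruzsa, so there is no in-paper argument to measure your attempt against. Judged on its own, your lower-bound half is correct and standard: a cyclic subgroup of order $\exp(G)$ is a direct summand, so $\mathbb{Z}_d$ is a quotient for every $d \mid \exp(G)$, the middle-third set $(d/3, 2d/3]$ is sum-free of size $\lfloor 2d/3\rfloor - \lfloor d/3\rfloor = \lceil (d-1)/3\rceil$, and sum-freeness pulls back along the projection (this is exactly the mechanism of Lemma \ref{lem:quotient}). The two branches of your upper-bound induction that you actually carry out are also sound: the $H=\stab(A+A)\neq\{0\}$ reduction to $G/H$ via the thickened set $A+H$, and the Kneser count $|A|+|A+A|\le |G|$, $|A+A|\ge 2|A|-1$ when $H=\{0\}$ and $G$ is cyclic.

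However, the case you defer --- $\stab(A+A)=\{0\}$ with $G$ not cyclic --- is not a technical loose end but essentially the entire content of the theorem, and the route you sketch for it does not work. Critical-pair theorems of Vosper/Kemperman type classify pairs with $|A+B|\le |A|+|B|$, i.e.\ doubling essentially $2$; but in the regime relevant here the sum-free condition only forces $|A+A|\le |G|-|A|$, so for a putative counterexample in, say, $(\mathbb{Z}_7)^2$ with $|A|/|G|$ slightly above $2/7$ one only knows $|A+A|\le \tfrac{5}{2}|A|$ or so, far outside the reach of any critical-pair structure theorem. This is precisely why Green and Ruzsa's treatment of their ``type III'' groups (all prime divisors $\equiv 1 \pmod 3$, where the target density $\frac{1}{3}(1-\frac{1}{\exp(G)})$ sits strictly below $1/3$) does not proceed through Kneser plus an inverse theorem: it is a lengthy Fourier-analytic argument (a ``granularization'' of $A$ using its large spectrum, followed by a reduction to an arithmetic statement in $\mathbb{Z}_m$), and it occupies the bulk of their paper. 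So the proposal, as written, proves the lower bound and the easy cases of the upper bound, but the acknowledged gap is a genuine one that cannot be closed by the tools you name.
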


Other statistics on sum-free sets have been the object of considerable study (see, e.g., \cite{green, taovu} and the references therein).
\\

This problem has more recently been generalized to $(k, \ell)$-sum-free sets.  For a positive integer $k$, let $kA=\underbrace{A+\cdots+A}_k$ denote the $k$-fold Minkowski sum of $A$ with itself (NOT the $k$-fold dilation of $A$).  Then, for a finite abelian group $G$, let $$\lambda_{k,\ell}(G)=\max \left\{\frac{|A|}{|G|}: kA \cap \ell A=\emptyset \right\}$$ denote the maximum density of a $(k,\ell)$-sum-free subset of $G$.  Trivially, $\lambda_{k,\ell}(G)=0$ when $k=\ell$, so by convention we take $1\leq k <\ell$.
\\

Most work has focused on $(k,\ell)$-sum-free sets in cyclic groups; the general abelian case remains far from understood.  Important results are due to Bier and Chin \cite{bierchin} and Hamidoune and Plagne \cite{hamidouneplagne}, whose approaches relied on Vosper's Theorem and Kneser's Theorem.  In 2018, Bajnok and Matzke \cite{bajnokmatzke} found a general expression for $\lambda_{k,\ell}(\mathbb{Z}_n)$ by analyzing $(k,\ell)$-sum-free arithmetic progressions.

\begin{theorem}[Bajnok and Matzke 2018]\label{thm:bajnokmatzke}
For any integers $1 \leq k <\ell$ and $n \geq 1$, we have
$$\lambda_{k,\ell}(\mathbb{Z}_n)=\max_{d|n} \left\{ \left\lceil \frac{d-\delta_d+r_d}{k+\ell} \right\rceil \cdot \frac{1}{d} \right\},$$
where $\delta_d =\gcd(d, \ell-k)$ and $r_d$ is the remainder of $k \left\lceil \frac{d-\delta_d}{k+\ell} \right\rceil$ modulo $\delta_d$.
\end{theorem}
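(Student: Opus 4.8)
The plan is to prove the two matching inequalities separately. For the lower bound I would produce, for each divisor $d\mid n$, a $(k,\ell)$-sum-free subset of $\mathbb{Z}_n$ of the stated density; for the upper bound I would show by induction on $n$ that some divisor $d\mid n$ witnesses the claimed ceiling, using Kneser's theorem together with a critical-pair analysis of its equality case. A basic tool throughout is the \emph{pullback principle}: if $d\mid n$ and $\pi\colon\mathbb{Z}_n\to\mathbb{Z}_n/d\mathbb{Z}_n\cong\mathbb{Z}_d$ is the reduction map, then $k\pi^{-1}(B)=\pi^{-1}(kB)$ and $\ell\pi^{-1}(B)=\pi^{-1}(\ell B)$ for every $B\subseteq\mathbb{Z}_d$, so $\pi^{-1}(B)$ is $(k,\ell)$-sum-free in $\mathbb{Z}_n$ exactly when $B$ is in $\mathbb{Z}_d$, and the two sets have equal density. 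Thus the divisor structure of the formula is essentially forced, and it suffices to show that in $\mathbb{Z}_d$ the optimal density is $\big\lceil\frac{d-\delta_d+r_d}{k+\ell}\big\rceil\cdot\frac1d$.

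For the lower bound, I would look for an extremal $B\subseteq\mathbb{Z}_d$ in the shape of an arithmetic progression; after dilating by a unit one may assume $B=\{a+1,a+2,\dots,a+t\}$ is a block of $t$ consecutive residues. The key reformulation is that, since $\ell B=kB+(\ell-k)B$, the condition $kB\cap\ell B=\emptyset$ is equivalent to $0\notin(\ell-k)B+(kB-kB)$; and for a block $B$, the set $(\ell-k)B+(kB-kB)$ is itself a block of exactly $(k+\ell)(t-1)+1$ consecutive residues whose location in $\mathbb{Z}_d$ is controlled by the value $(\ell-k)a$. As $a$ runs over $\mathbb{Z}_d$, the quantity $(\ell-k)a$ runs over the subgroup of multiples of $\delta_d=\gcd(d,\ell-k)$, so the construction succeeds iff that coset of $\delta_d\mathbb{Z}_d$ can be shifted clear of the forbidden block. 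Determining the longest block of residues that every coset of $\delta_d\mathbb{Z}_d$ must meet is an elementary computation; carrying it out — and tracking the congruence $\ell(t-1)\equiv k(t-1)\pmod{\delta_d}$, which is exactly where the correction $r_d$ enters — should show that the largest admissible $t$ equals $\big\lceil\frac{d-\delta_d+r_d}{k+\ell}\big\rceil$. Pulling these examples back over all $d\mid n$ gives the lower bound.

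For the upper bound I would induct on $n$. Let $A\subseteq\mathbb{Z}_n$ be $(k,\ell)$-sum-free and set $H=\stab(\ell A)$. If $H$ is nontrivial, then $\ell A=\pi^{-1}(\ell\pi(A))$ for the reduction $\pi\colon\mathbb{Z}_n\to\mathbb{Z}_n/H\cong\mathbb{Z}_d$ with $d=n/|H|<n$; since $kA$ is disjoint from the $H$-periodic set $\ell A$, its image $A':=\pi(A)$ satisfies $kA'\cap\ell A'=\emptyset$, and the density of $A$ is at most that of $A'$ in $\mathbb{Z}_d$, which by induction is at most $\max_{d'\mid d}\{\cdots\}\le\max_{d'\mid n}\{\cdots\}$. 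So assume $\ell A$ is aperiodic; since any period of $kA$ (or of $(\ell-k)A$) is automatically a period of $kA+(\ell-k)A=\ell A$, both sumsets $kA$ and $(\ell-k)A$ are then aperiodic as well. Now $kA$ is disjoint from $\ell A$, so $|kA|+|\ell A|\le n$; grouping $\ell A=kA+(\ell-k)A$ and applying Kneser's theorem (all stabilizers trivial) gives $|\ell A|\ge|kA|+|(\ell-k)A|-1$, hence $2|kA|+|(\ell-k)A|\le n+1$; two further applications of Kneser give $|kA|\ge k|A|-(k-1)$ and $|(\ell-k)A|\ge(\ell-k)|A|-(\ell-k-1)$. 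Combining, $(k+\ell)|A|\le n+k+\ell-2$, so $|A|\le\big\lceil\frac{n-1}{k+\ell}\big\rceil$.

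This already proves the theorem whenever $\delta_n=1$, since then $r_n=0$ and taking $d=n$ gives the matching bound. The remaining case $\delta_n\ge 2$ — sharpening $n-1$ to $n-\delta_n+r_n$ inside the ceiling — is the main obstacle, and it is precisely the equality regime of the chain of inequalities above. When $|A|$ comes within $\delta_n$ of the upper estimate, all three applications of Kneser must be nearly tight; by the standard characterization of (near-)equality in Kneser's theorem and Vosper-type critical-pair theorems, this forces $A$ itself to be an arithmetic progression in $\mathbb{Z}_n$, up to a short list of exceptional configurations (empty sets, singletons, $kA$ or $\ell A$ filling $\mathbb{Z}_n$, small moduli) that can be disposed of by hand. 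At that point the block computation from the lower-bound analysis applies verbatim to $A$ and pins its size down to exactly $\big\lceil\frac{n-\delta_n+r_n}{k+\ell}\big\rceil$, completing the induction. The technical heart is therefore this critical-pair analysis; the arithmetic-progression bookkeeping, while fiddly, is routine, and it is where the somewhat opaque quantities $\delta_d$ and $r_d$ originate.
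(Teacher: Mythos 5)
This statement is quoted from the literature: the paper states Theorem~\ref{thm:bajnokmatzke} with a citation to Bajnok and Matzke and gives no proof of it, so there is no in-paper argument to compare your proposal against. Judged on its own terms, your sketch gets the architecture right --- the pullback principle explaining the maximum over divisors, arithmetic-progression constructions for the lower bound, and an induction on $n$ that splits on whether $\stab(\ell A)$ is trivial, with Kneser's theorem handling the aperiodic case. The reduction when $\stab(\ell A)$ is nontrivial is correct (disjointness of $kA$ from an $H$-periodic set does descend to the quotient), and your chain of inequalities in the aperiodic case correctly yields $|A|\le\bigl\lceil\frac{n-1}{k+\ell}\bigr\rceil$, which settles the theorem when $\gcd(n,\ell-k)=1$.

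The genuine gap is exactly where you place the ``technical heart'': the sharpening from $n-1$ to $n-\delta_n+r_n$ when $\delta_n\ge 2$. You assert that near-equality in your chain of Kneser applications ``forces $A$ itself to be an arithmetic progression'' by ``the standard characterization of (near-)equality in Kneser's theorem and Vosper-type critical-pair theorems.'' No such off-the-shelf result exists in the strength you need. Even the exact equality case $|A+B|=|A|+|B|-1$ in a composite cyclic group is governed by Kemperman's structure theorem, which permits recursive quasi-periodic configurations that are very far from arithmetic progressions; and you need not equality but a window of width $\delta_n$ below the crude bound, a rigidity regime (in the spirit of Freiman-type $3k-4$ theorems for $\mathbb{Z}_n$) that is substantially harder and not ``routine.'' Moreover, the list of exceptional configurations in these structure theorems is not short and cannot simply be ``disposed of by hand'' uniformly in $n$, $k$, $\ell$. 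This step is precisely the content of the Bajnok--Matzke paper, whose argument extracts the $\delta$ improvement by a more elementary analysis of how $kA$ is displaced off itself by translates coming from $(\ell-k)A$ (which live in the index-$\delta_n$ subgroup $(\ell-k)\mathbb{Z}_n$), rather than by invoking critical-pair classifications. As written, your proposal proves the theorem only when $\gcd(n,\ell-k)=1$ and leaves the main case open. A secondary, smaller issue: the lower-bound bookkeeping that produces $r_d$ is only asserted (``should show''), and since $r_d$ is the one genuinely opaque quantity in the formula, that computation needs to be carried out rather than presumed.
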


For further background, see the excellent exposition in \cite{bajnok}.
\\

One might wonder about the analogous problem on the circle group $\mathbb{T}=\mathbb{R}/\mathbb{Z}$ (with the Euclidean topology) and the $d$-dimensional torus $\mathbb{T}^d$.  In this paper, we generalize the study of $(k,\ell)$-sum-free sets to compact abelian groups.  For a compact abelian group $G$, let $\mu$ be the probability Haar measure (normalized so that $\mu(G)=1$).  We then define
$$\lambda_{k,\ell}(G)=\sup\{ \mu(A): kA \cap \ell A =\emptyset \},$$
where the supremum runs over all measurable subsets $A \subset G$.  Note that when $G$ is finite, this definition coincides with the definition above.  (Previous generalizations of topics in additive combinatorics to a continuous setting include analogs of Mann's Theorem \cite{kemperman} and Freiman's Theorem \cite{roton}.)
\\

The main result of this paper is the following formula for $\lambda_{k, \ell}(G)$ when $G$ can be written as $G=\mathbb{I} \times M$, where $\mathbb{I}$ is the identity component of $G$ (and $M$ is finite or profinite).

\begin{theorem}[Main Theorem]\label{thm:main}
For any integers $1\leq k<\ell$ and any compact abelian group $G=\mathbb{I} \times M$, we have
$$\lambda_{k,\ell}(G)=\max \left\{ \lambda_{k, \ell}(M), \lambda_{k, \ell}(\mathbb{I}) \right\}.$$
Also, if $\mathbb{I}$ is nontrivial, then $\lambda_{k, \ell}(\mathbb{I})=\frac{1}{k+\ell}$ and
$$\lambda_{k,\ell}(G)=\max \left\{ \lambda_{k, \ell}(M), \frac{1}{k+\ell} \right\}.$$
\end{theorem}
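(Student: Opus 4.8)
The plan is to establish the lower bound $\lambda_{k,\ell}(G)\ge\max\{\lambda_{k,\ell}(M),\lambda_{k,\ell}(\mathbb{I})\}$ (and $\lambda_{k,\ell}(\mathbb{I})\ge\tfrac1{k+\ell}$) by explicit constructions, and the matching upper bound by a short case analysis resting on Kneser's Theorem. Write $\pi\colon G=\mathbb{I}\times M\to M$ for the projection, so $\ker\pi=\mathbb{I}$.

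For the lower bounds, note that if $B\subset M$ is $(k,\ell)$-sum-free then $\mathbb{I}\times B$ is $(k,\ell)$-sum-free in $G$ — since $k(\mathbb{I}\times B)=\mathbb{I}\times kB$ and likewise for $\ell$ — and $\mu_G(\mathbb{I}\times B)=\mu_M(B)$; symmetrically $B'\times M$ works for $(k,\ell)$-sum-free $B'\subset\mathbb{I}$. Hence $\lambda_{k,\ell}(G)\ge\max\{\lambda_{k,\ell}(M),\lambda_{k,\ell}(\mathbb{I})\}$. For $\lambda_{k,\ell}(\mathbb{I})\ge\tfrac1{k+\ell}$ with $\mathbb{I}$ nontrivial, one checks that the arc $A_0=\bigl(\tfrac{k}{(k+\ell)(\ell-k)},\tfrac{\ell}{(k+\ell)(\ell-k)}\bigr)\subset\mathbb{T}$, of measure $\tfrac1{k+\ell}$, is $(k,\ell)$-sum-free: it lifts to an interval of length $\tfrac1{k+\ell}<1$ in $\mathbb{R}$, and a short computation shows that $kA_0$ and $\ell A_0$ are then the two complementary open arcs of $\mathbb{T}$ cut out by the points $\tfrac{k^2}{(k+\ell)(\ell-k)}$ and $\tfrac{k\ell}{(k+\ell)(\ell-k)}$, so they are disjoint. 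Since $\mathbb{I}$ is connected, $\widehat{\mathbb{I}}$ is a nontrivial torsion-free discrete group and hence contains a copy of $\mathbb{Z}$; dualizing the inclusion $\mathbb{Z}\hookrightarrow\widehat{\mathbb{I}}$ produces a continuous surjection $\mathbb{I}\twoheadrightarrow\mathbb{T}$, and the preimage of $A_0$ is a $(k,\ell)$-sum-free subset of $\mathbb{I}$ of measure $\tfrac1{k+\ell}$.

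For the upper bound, let $A\subset G$ be measurable with $kA\cap\ell A=\emptyset$; by inner regularity we may assume $A$ is compact, so each $jA$ is compact and $H:=\stab(\ell A)$ is a closed subgroup of $G$. \textbf{Case 1: $\mathbb{I}\not\subseteq H$.} Because $\mathbb{I}$ is connected, every open subgroup of $G$ contains $\mathbb{I}$; thus $H$ is not open and $\mu_G(H)=0$, and since $\stab(kA)\subseteq\stab(\ell A)=H$ (as $\ell A=kA+(\ell-k)A$) also $\mu_G(\stab(kA))=0$. The iterated form of Kneser's Theorem, $\mu(jA)\ge j\,\mu(A)-(j-1)\mu(\stab(jA))$, now gives $\mu(kA)\ge k\,\mu(A)$ and $\mu(\ell A)\ge\ell\,\mu(A)$, so $(k+\ell)\mu(A)\le\mu(kA)+\mu(\ell A)\le 1$ and $\mu(A)\le\tfrac1{k+\ell}$. (Case~1 forces $\mathbb{I}$ nontrivial, since otherwise $\mathbb{I}=\{0\}\subseteq H$; specializing to $M=\{0\}$ and combining with the lower bound yields $\lambda_{k,\ell}(\mathbb{I})=\tfrac1{k+\ell}$.) \textbf{Case 2: $\mathbb{I}\subseteq H$.} Then $\ell A$ is invariant under $\ker\pi$, so $\ell A=\pi^{-1}(\ell\,\pi(A))$, and $kA\cap\ell A=\emptyset$ forces $\pi(kA)\cap\ell\,\pi(A)=\emptyset$, i.e.\ $k\,\pi(A)\cap\ell\,\pi(A)=\emptyset$; hence $\pi(A)$ is a compact $(k,\ell)$-sum-free subset of $M$ and $\mu_G(A)\le\mu_G(\pi^{-1}(\pi(A)))=\mu_M(\pi(A))\le\lambda_{k,\ell}(M)$. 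In both cases $\mu(A)\le\max\{\lambda_{k,\ell}(M),\lambda_{k,\ell}(\mathbb{I})\}$, which with the lower bound gives the first identity; substituting $\lambda_{k,\ell}(\mathbb{I})=\tfrac1{k+\ell}$ in the nontrivial case gives the rest.

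The conceptual content — the Case~1/Case~2 dichotomy governed by whether $\mathbb{I}\subseteq\stab(\ell A)$ — is brief. The part I expect to require genuine care is the analytic bookkeeping: justifying the reduction to compact $A$, checking that $kA$ and $\ell A$ are well enough behaved to feed into Kneser's Theorem, and invoking the correct iterated version of Kneser's Theorem for compact subsets of a compact abelian group (with $\stab(jA)$ as the relevant periodicity subgroup), together with disposing of the degenerate cases $\mu(A)=0$ and $\ell A=G$.
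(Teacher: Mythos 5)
Your proof is correct and follows essentially the same route as the paper: Kneser's Theorem when the stabilizer of $\ell A$ is small, projection onto $M$ when that stabilizer contains $\mathbb{I}$, and Pontryagin duality together with the interval $\left(\frac{k}{\ell^2-k^2},\frac{\ell}{\ell^2-k^2}\right)$ for the lower bound on connected groups. The only (harmless) organizational differences are that you split cases on whether $\mathbb{I}\subseteq\stab(\ell A)$ rather than on whether $\stab(\ell A)$ is open, you reduce to compact $A$ to sidestep measurability of the sumsets, and you use disjointness of $kA$ and $\ell A$ directly where the paper invokes Lemma \ref{lem:sumover1}.
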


Note that positive-dimensional compact abelian Lie groups are included in the latter case.  In particular, $\lambda_{k,\ell}(\mathbb{T}^d)=\frac{1}{k+\ell}$ answers our original question about the $d$-dimensional torus.
\\

In Section $2$, we prove the Main Theorem.  We will make use of the following deep classical result of Kneser \cite{kneser}.  Here, $\mu_{\ast}$ denotes the inner Haar probability measure.  (Even on $\mathbb{T}$, the Minkowski sum of two measurable sets need not be measurable.)

\begin{theorem}[Kneser 1956]\label{thm:kneser}
Let $G$ be a compact abelian group with Haar probability measure $\mu$, and let $A$ and $B$ be nonempty measurable subsets of $G$.  Then
$$\mu_{\ast}(A+B) \geq \min \{ \mu(A)+\mu(B), 1\},$$
unless the stabilizer $H=\stab(A+B)$ is an open subgroup of $G$, in which case
$$\mu_{\ast}(A+B)\geq \mu(A)+\mu(B)-\mu(H).$$
\end{theorem}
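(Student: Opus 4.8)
The plan is to establish the single uniform inequality $\mu_{\ast}(A+B)\ge \mu(A)+\mu(B)-\mu(H)$ with $H=\stab(A+B)$, and then read both cases off from it. First I would move toward compact sets: by inner regularity, replacing $A,B$ by compact subsets of nearly full measure makes $A+B$ a continuous image of the compact set $A\times B$, hence compact, closed, and measurable, so that $\mu_{\ast}(A+B)=\mu(A+B)$ there. The stabilizer $H$ is a closed subgroup, and in a compact group a closed subgroup has positive Haar measure precisely when it is open (equivalently of finite index). Thus when $H$ is not open we have $\mu(H)=0$, the uniform bound becomes $\mu_{\ast}(A+B)\ge \mu(A)+\mu(B)$, and capping against $\mu_{\ast}(A+B)\le 1$ recovers $\min\{\mu(A)+\mu(B),1\}$; this is what produces the stated dichotomy. (One subtlety to track throughout is that the stabilizer of a compact approximant may differ from that of $A+B$ itself.)

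Next I would dispose of the case $H$ open. Writing $\pi\colon G\to G/H$ for the quotient onto the finite group $G/H$, the set $A+B$ is $H$-periodic, so $A+B=\pi^{-1}(\pi(A)+\pi(B))$ and $\mu(A+B)=\mu(H)\,\lvert\pi(A)+\pi(B)\rvert$; moreover $\stab(\pi(A)+\pi(B))$ is trivial by maximality of $H$. The classical Kneser addition theorem in the \emph{finite} group $G/H$ then gives $\lvert\pi(A)+\pi(B)\rvert\ge\lvert\pi(A)\rvert+\lvert\pi(B)\rvert-1$, and combining this with the density comparisons $\lvert\pi(A)\rvert\ge\mu(A)/\mu(H)$ and $\lvert\pi(B)\rvert\ge\mu(B)/\mu(H)$ (each set sits inside its saturation) yields exactly $\mu(A+B)\ge\mu(A)+\mu(B)-\mu(H)$. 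So this case reduces cleanly to the discrete theorem.

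The substance is the case $\mu(H)=0$, where I would aim for $\mu_{\ast}(A+B)\ge\min\{\mu(A)+\mu(B),1\}$. The engine is the measure-theoretic Dyson ($e$-)transform: for $e\in G$ put $A_e=A\cup(B+e)$ and $B_e=B\cap(A-e)$, so that $A_e+B_e\subseteq A+B$ while $\mu(A_e)+\mu(B_e)=\mu(A)+\mu(B)$, allowing one to push measure into the first set without enlarging the sumset or the quantity to be bounded. To convert iterated transforms into a theorem I would appeal to structure theory, writing $G$ as an inverse limit of compact abelian Lie groups $\mathbb{T}^{d_i}\times F_i$ (duals of the finitely generated subgroups of $\widehat{G}$). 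On the totally disconnected directions the bound descends from discrete Kneser in the finite quotients $G/U_i$ for a shrinking family of open subgroups $U_i$, passing to the limit of normalized counting measures; on the connected directions $\mathbb{T}^d$, where the transform compresses near-extremal sets to products of arcs, one recovers the Brunn--Minkowski-type inequality $\mu_{\ast}(A+B)\ge\mu(A)+\mu(B)$ in the aperiodic regime.

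The main obstacle is exactly this connected case together with the discrete-to-continuous passage. A connected compact group has no proper open subgroup, so there is no finite quotient to reduce to, and the $e$-transform must be controlled directly, with its termination or limit handled by a compactness/extremality argument guaranteeing that no measure is lost. Even in the totally disconnected setting the finite-quotient stabilizers $\stab(\pi_i(A+B))$ need not shrink to $H$ as $i\to\infty$, so the delicate point is to prevent these approximating stabilizers from inflating the error term in the limit. Reconciling the two regimes into one inequality, and correctly detecting the threshold at which $H$ becomes open, is the technical heart of Kneser's argument.
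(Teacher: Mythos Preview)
The paper does not prove this statement. Theorem~\ref{thm:kneser} is quoted as a classical result of Kneser (1956), with the citation \cite{kneser}, and is used as a black box in the proof of Theorem~\ref{thm:upperbound}; no argument for it appears anywhere in the paper. So there is no ``paper's own proof'' to compare your proposal against.

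As for the proposal itself: what you have written is a reasonable high-level outline of how Kneser's theorem is typically approached (inner regularity to pass to compact sets, reduction of the open-stabilizer case to the finite Kneser theorem via the quotient $G/H$, and the $e$-transform combined with structure theory for the aperiodic case), and you correctly identify where the real work lies. But it is an outline rather than a proof: the connected case and the inverse-limit passage are flagged as obstacles and not actually carried out, and the issue you raise about approximating stabilizers is genuine and unresolved in your sketch. If the intent was to supply a full proof, substantial work remains; if the intent was only to indicate the shape of the argument, that is fine, but note that the paper simply takes the result for granted.
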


In Section $3$, we discuss some consequences of our results and possible future lines of inquiry.  In particular, the compact case inspires a curious new framework for investigating $(k,\ell)$-sum-free sets in the finite context.

\section{proofs}

We begin by recording a few general observations.

\begin{lemma}\label{lem:sumover1}
Let $A$ and $B$ be (not necessarily measurable) subsets of a compact abelian group $G$, with $\mu_{\ast}(A)+\mu_{\ast}(B)>1$.  Then $A+B=G$.
\end{lemma}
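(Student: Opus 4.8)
The plan is to reduce the equality $A+B=G$ to a pointwise statement about intersecting translates. Fix an arbitrary $g\in G$; I want to produce $a\in A$ and $b\in B$ with $a+b=g$. Writing $g-B=\{g-b:b\in B\}$, this is exactly the assertion that $A\cap(g-B)\neq\emptyset$, so I will suppose toward a contradiction that $A\cap(g-B)=\emptyset$ and derive $\mu_\ast(A)+\mu_\ast(B)\leq 1$.

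The argument rests on two elementary facts about inner Haar measure. First, $\mu_\ast$ is invariant under the homeomorphism $x\mapsto g-x$ of $G$: this map is the composition of the inversion automorphism $x\mapsto -x$, which preserves Haar measure on any compact (hence unimodular) group, with translation by $g$, which preserves Haar measure as well; both send measurable sets bijectively to measurable sets, so $\mu_\ast(g-B)=\mu_\ast(B)$. Second, $\mu_\ast$ is superadditive on disjoint sets: if $S\cap T=\emptyset$ then $\mu_\ast(S)+\mu_\ast(T)\leq\mu_\ast(S\cup T)$, which follows from the definition $\mu_\ast(S)=\sup\{\mu(K):K\subseteq S\text{ measurable}\}$ by choosing measurable $K\subseteq S$, $L\subseteq T$, noting $K\cap L=\emptyset$ and $K\cup L\subseteq S\cup T$, and applying finite additivity of $\mu$.

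Combining these: since $A$ and $g-B$ are disjoint, superadditivity and $\mu(G)=1$ give
$$\mu_\ast(A)+\mu_\ast(g-B)\leq \mu_\ast\bigl(A\cup(g-B)\bigr)\leq 1,$$
and replacing $\mu_\ast(g-B)$ by $\mu_\ast(B)$ yields $\mu_\ast(A)+\mu_\ast(B)\leq 1$, contradicting the hypothesis. Hence $A\cap(g-B)\neq\emptyset$, so $g\in A+B$; as $g\in G$ was arbitrary, $A+B=G$.

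I do not expect a genuine obstacle here — the only steps requiring care are the two facts about inner measure quoted above, and neither is deep. As an alternative that sidesteps inner measure entirely, one can first pick measurable sets $K\subseteq A$ and $L\subseteq B$ with $\mu(K)+\mu(L)>1$ (possible by definition of $\mu_\ast$), then run the identical translate-intersection argument with $K,L$ in place of $A,B$, using only ordinary additivity of $\mu$, to get $K+L=G\subseteq A+B$.
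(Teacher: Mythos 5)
Your proof is correct and is essentially the paper's argument: both rest on the observation that $g\notin A+B$ forces $A$ and $g-B$ to be disjoint, whence the (inner) measures sum to at most $1$. The only difference is packaging — the paper first extracts closed subsets $A_\ast\subseteq A$, $B_\ast\subseteq B$ with $\mu(A_\ast)+\mu(B_\ast)>1$ and then uses ordinary additivity, which is exactly the ``alternative'' you sketch in your final paragraph, while your main write-up instead verifies translation-invariance and superadditivity of $\mu_\ast$ directly.
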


\begin{proof}
There exist closed subsets $A_{\ast}\subseteq A$ and $B_{\ast}\subseteq B$ satisfying $\mu(A_{\ast})+\mu(B_{\ast})>1$.  Assume (for the sake of contradiction) that there exists some $g \in G \setminus (A+B)$.  Then $g \notin A_{\ast}+B_{\ast}$, so $A_{\ast}$ and $\{g\}-B_{\ast}$ are disjoint.  But $1 \geq \mu(A_{\ast})+\mu(\{g\}-B_{\ast})=\mu(A_{\ast})+\mu(B_{\ast})$ yields a contradiction.
\end{proof}

\begin{lemma}\label{lem:subgroup}
Let $G=\mathbb{I} \times M$ be a compact abelian group.  Then any open subgroup $H$ of $G$ is of the form $H=\mathbb{I} \times N$, where $N$ is a subgroup of $M$.
\end{lemma}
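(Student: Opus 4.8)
The plan is to reduce the statement to two facts: that any open subgroup of $G$ must contain the identity component $\mathbb{I}$, and that the product structure $G=\mathbb{I}\times M$ then lets one ``peel off'' $\mathbb{I}$ and recognize what is left as a subgroup of $M$.

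First I would record the standard topological input: an open subgroup $H$ of a topological group is automatically closed, since its complement is a union of translates $gH$, each of which is open. Thus $H$ is clopen in $G$. Restricting to $\mathbb{I}$ (identified with $\mathbb{I}\times\{0\}\le G$), the set $H\cap\mathbb{I}$ is clopen in $\mathbb{I}$ and contains the identity, hence is nonempty. Since $\mathbb{I}$ is connected, its only nonempty clopen subset is $\mathbb{I}$ itself, so $H\cap\mathbb{I}=\mathbb{I}$; that is, $\mathbb{I}\subseteq H$.

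Next I would set $N=\{m\in M:(0,m)\in H\}$, which is visibly a subgroup of $M$, and check that $H=\mathbb{I}\times N$. For ``$\supseteq$'': given $i\in\mathbb{I}$ and $m\in N$, we have $(i,0)\in H$ by the previous paragraph and $(0,m)\in H$ by definition of $N$, so $(i,m)=(i,0)+(0,m)\in H$. For ``$\subseteq$'': given $(i,m)\in H$, subtract $(i,0)\in H$ to get $(0,m)\in H$, so $m\in N$ and $(i,m)\in\mathbb{I}\times N$. This establishes the lemma; one may additionally note that $N$ is open in $M$ since $H$ is open in $G$, though this is not needed for the statement as written.

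The argument is short, and there is no serious obstacle: the one point that genuinely does the work is the topological observation that an open subgroup is closed combined with the connectedness of $\mathbb{I}$, which together force $\mathbb{I}\subseteq H$. Everything afterward is routine manipulation inside the direct product. (Note also that this lemma does not use the profiniteness or total disconnectedness of $M$ at all — only that $\mathbb{I}$ is connected.)
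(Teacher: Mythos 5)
Your proof is correct and takes essentially the same approach as the paper: both arguments use the connectedness of $\mathbb{I}$ (via the observation that the cosets of the open set $H\cap\mathbb{I}$ partition $\mathbb{I}$, equivalently that an open subgroup is clopen) to force $\mathbb{I}\subseteq H$, and then identify $H$ with $\mathbb{I}\times N$ for a subgroup $N\le M$. Your explicit choice $N=\{m\in M:(0,m)\in H\}$ simply spells out the paper's terser final step that $H/\mathbb{I}$ is a subgroup of $G/\mathbb{I}\cong M$.
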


\begin{proof}
The set $U=\mathbb{I} \cap H$ is open in $\mathbb{I}$ (in the subgroup topology) and nonempty (since it contains the identity of $G$).  Then $U$ and its cosets are an open partition of $\mathbb{I}$.  Since $\mathbb{I}$ is connected, $U=\mathbb{I}$.  Finally, noting that $H/\mathbb{I}$ is a subgroup of $G/\mathbb{I} \cong M$ completes the proof.
\end{proof}

\begin{lemma}\label{lem:stabilizer}
Let $A$ be a nonempty subset of an abelian group $G$.  Then for any integers $1\leq i < j$, we have $\stab(iA) \subseteq \stab(jA)$ as an inclusion of subgroups.
\end{lemma}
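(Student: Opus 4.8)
The plan is to exploit the additive decomposition $jA = iA + (j-i)A$, which is valid precisely because the hypothesis $1 \leq i < j$ guarantees both $i \geq 1$ and $j - i \geq 1$, and because $A$ being nonempty forces every $nA$ with $n \geq 1$ to be nonempty. Before using this, I would record the standard fact that for \emph{any} subset $X \subseteq G$ the set $\stab(X) = \{g \in G : g + X = X\}$ is automatically a subgroup: it contains $0$; it is closed under addition, since $g_1 + X = X$ and $g_2 + X = X$ yield $(g_1+g_2)+X = X$; and it is closed under negation, since $g + X = X$ gives $X = -g + X$. Hence it suffices to prove the set-theoretic containment $\stab(iA) \subseteq \stab(jA)$, and the ``inclusion of subgroups'' phrasing comes for free.

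The core step is then a one-line computation. Take any $h \in \stab(iA)$, so that $h + iA = iA$. Using the decomposition above together with the associativity and commutativity of Minkowski addition, we get
$$h + jA = h + \bigl( iA + (j-i)A \bigr) = (h + iA) + (j-i)A = iA + (j-i)A = jA,$$
so $h \in \stab(jA)$. This establishes the desired containment.

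There is essentially no genuine obstacle here; the only points that warrant (minor) care are verifying that $1 \leq i < j$ makes the splitting $jA = iA + (j-i)A$ legitimate (both summands present, both with at least one copy of $A$, and both nonempty), and noting at the outset that stabilizers of arbitrary sets are subgroups so that the statement reduces to a plain inclusion of sets.
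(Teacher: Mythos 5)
Your proposal is correct and uses exactly the same argument as the paper: decompose $jA = iA + (j-i)A$ and observe that $h + jA = (h+iA) + (j-i)A = jA$ for $h \in \stab(iA)$. The additional remarks about stabilizers being subgroups and nonemptiness are fine but not needed beyond what the paper already assumes.
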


\begin{proof}
For any $h \in \stab (iA)$, we have $\{h\}+jA=(\{h\}+iA)+(j-i)A=iA+(j-i)A=jA$, so $h \in \stab(jA)$.
\end{proof}

We now bound $\lambda_{k,\ell}(G)$ from above.

\begin{theorem}\label{thm:upperbound}
For any integers $1 \leq k<\ell$ and any compact abelian group $G=\mathbb{I} \times M$, we have $$\lambda_{k, \ell}(G) \leq \max \left\{ \lambda_{k, \ell}(M), \frac{1}{k+\ell} \right\}.$$
\end{theorem}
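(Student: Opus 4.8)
The plan is to take an arbitrary measurable $(k,\ell)$-sum-free set $A\subseteq G$ and show $\mu(A)\le \max\{\lambda_{k,\ell}(M),\frac{1}{k+\ell}\}$. First I would reduce to the case that $A$ is compact and nonempty. If $\mu(A)=0$ there is nothing to prove; otherwise, by inner regularity of the Haar measure, $A$ contains compact subsets of measure arbitrarily close to $\mu(A)$, and any such subset is again $(k,\ell)$-sum-free (passing to a subset of $A$ only shrinks $kA$ and $\ell A$), so it suffices to bound $\mu(A)$ for $A$ compact. The point of this reduction is that the sum of two compact subsets of $G$ is compact — it is the continuous image of a product of compact sets — so every iterated sumset $jA$ is compact, hence measurable, and Kneser's theorem (Theorem~\ref{thm:kneser}) applies to it directly, with no inner-measure caveats. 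Write $\alpha=\mu(A)>0$.

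The core of the proof is the following dichotomy: for every $1\le j\le \ell$, either $\mu(jA)\ge \min\{j\alpha,1\}$ or $\stab(jA)$ is an open subgroup of $G$. This is proved by induction on $j$, using two facts repeatedly: the monotonicity $\stab(iA)\subseteq\stab(jA)$ for $i\le j$ of Lemma~\ref{lem:stabilizer}, and the elementary observation that a subgroup containing an open subgroup is itself open. The base case $j=1$ is immediate. For the inductive step, if $\stab((j-1)A)$ is open then so is the larger subgroup $\stab(jA)$; otherwise $\mu((j-1)A)\ge\min\{(j-1)\alpha,1\}$ by induction, and applying Theorem~\ref{thm:kneser} to the pair $((j-1)A,\,A)$ gives either $\mu(jA)\ge\min\{\mu((j-1)A)+\alpha,\,1\}\ge\min\{j\alpha,1\}$ or $\stab(jA)$ open. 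I would then split on whether $\stab(\ell A)$ is open.

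Suppose $H:=\stab(\ell A)$ is open. By Lemma~\ref{lem:subgroup}, $H=\mathbb{I}\times N$ with $N$ open in $M$, so $G/H\cong M/N$ is finite; let $\pi\colon G\to G/H$ be the quotient map. The key observation is that disjointness upgrades from $kA\cap\ell A=\emptyset$ to $(kA+H)\cap \ell A=\emptyset$: if $w\in kA$, $h\in H$ and $w+h\in \ell A$, then $w=(w+h)-h\in \ell A-H=\ell A$, contradicting $kA\cap\ell A=\emptyset$. Since $\ell A$ is $H$-invariant, this says precisely that $\pi(kA)$ and $\pi(\ell A)$ are disjoint in $G/H$; as $\pi$ is a homomorphism, $\pi(kA)=k\,\pi(A)$ and $\pi(\ell A)=\ell\,\pi(A)$, so $\pi(A)$ is $(k,\ell)$-sum-free in $G/H$. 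Fibering $\mu$ over the finite quotient $G/H$ (the normalized measure of $A$ inside each $H$-coset is at most $1$, and $A$ meets at most $|\pi(A)|$ of the $|G/H|$ cosets) gives $\mu(A)\le \frac{|\pi(A)|}{|G/H|}\le \lambda_{k,\ell}(G/H)=\lambda_{k,\ell}(M/N)\le \lambda_{k,\ell}(M)$, the final inequality because pulling a $(k,\ell)$-sum-free subset of $M/N$ back along $M\to M/N$ yields a clopen $(k,\ell)$-sum-free subset of $M$ of the same density.

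Finally, suppose $\stab(\ell A)$ is not open. Then $\stab(jA)$ is not open for any $j\le\ell$, since otherwise the larger subgroup $\stab(\ell A)$ would be open; so the dichotomy forces $\mu(kA)\ge\min\{k\alpha,1\}$ and $\mu(\ell A)\ge\min\{\ell\alpha,1\}$. As $kA$ and $\ell A$ are disjoint compact sets, $\mu(kA)+\mu(\ell A)\le 1$; the possibility $\ell\alpha\ge 1$ is excluded, as it would force $\mu(kA)\le 0$, so $(k+\ell)\alpha\le 1$ and $\mu(A)\le\frac{1}{k+\ell}$. Combining the two cases gives the bound for compact $A$, and the opening reduction then gives it for all measurable $A$. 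I expect the main obstacle to be the measure-theoretic bookkeeping — arranging that Kneser's theorem can be applied to the a priori non-measurable sumsets $jA$, which the compact-approximation step above is designed to circumvent — together with the descent step, where the subtle point is that $(k,\ell)$-sum-freeness survives projection to $G/H$ even though $kA$ itself need not be $H$-invariant; the identity $(kA+H)\cap\ell A=\emptyset$ is what makes that work.
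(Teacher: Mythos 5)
Your proof is correct, and while it shares the paper's overall skeleton --- split on whether $\stab(\ell A)$ is open, apply Kneser's theorem when it is not, and descend to a discrete quotient when it is --- the execution of both branches is genuinely different, and in each case arguably tighter. In the non-open branch, the paper keeps $kA$ and $\ell A$ as possibly non-measurable sets, controls only their inner measures, and invokes Lemma \ref{lem:sumover1} to conclude $kA+\ell(-A)=G$, extracting an explicit solution of $a_1+\cdots+a_k=a_{k+1}+\cdots+a_{k+\ell}$; you instead reduce at the outset to compact $A$ (so all iterated sumsets are compact, hence measurable) and then simply add the measures of the disjoint sets $kA$ and $\ell A$. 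Your explicit induction establishing the dichotomy ``$\mu(jA)\geq\min\{j\alpha,1\}$ or $\stab(jA)$ open'' also makes rigorous a step the paper compresses into one sentence: iterating Kneser requires knowing at each stage that the relevant stabilizer is not open, which is exactly what Lemma \ref{lem:stabilizer} plus the observation that a subgroup containing an open subgroup is open delivers. In the open branch, the paper quotients only by $\mathbb{I}$ and works with the projection $P\subseteq M$, asserting $|P|/|M|\geq\mu(A)>\lambda_{k,\ell}(M)$ --- a step that is slightly informal when $M$ is infinite profinite, since $P$ need not be measurable and $\lambda_{k,\ell}(M)$ is defined as a supremum over measurable sets; you instead quotient by the full open stabilizer $H$, land in the \emph{finite} group $G/H\cong M/N$ where no measurability issues arise, verify sum-freeness of $\pi(A)$ via the identity $(kA+H)\cap\ell A=\emptyset$, and then climb back to $\lambda_{k,\ell}(M)$ via the quotient monotonicity of Lemma \ref{lem:quotient}. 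The trade-off is that your route leans on inner regularity of Haar measure and the finiteness of $G/H$, while the paper's route stays closer to the raw sets at the cost of some measure-theoretic care; both are valid, and yours patches the two points where the paper is most terse.
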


\begin{proof}
Assume (for the sake of contradiction) that there exists a $(k, \ell)$-sum-free set $A \subseteq G$ of measure strictly greater than both $\lambda_{k, \ell}(M)$ and $\frac{1}{k+\ell}$.  Consider $H=\stab (\ell A)$, and recall from Lemma \ref{lem:stabilizer} that $\stab(k A) \subseteq H$.  Theorem \ref{thm:kneser} tells us that if $H$ is NOT an open subgroup of $G$, then $\mu_{\ast}(kA) \geq \min \{ k \mu(A), 1\}$ and $\mu_{\ast}(\ell (-A))=\mu_{\ast}(\ell A) \geq \min \{ \ell \mu(A), 1\}$.  Then, since $$\mu_{\ast}(kA)+\mu_{\ast}(\ell A)>\frac{k}{k+\ell}+\frac{\ell}{k+\ell}=1,$$ Lemma \ref{lem:sumover1} gives $kA+\ell(-A)=G$, and, in particular, $0 \in kA +\ell (-A)$.  So there exist $a_1, \ldots, a_{k+\ell} \in A$ satisfying $a_1+\cdots +a_k=a_{k+1}+\cdots +a_{k+\ell}$, which contradicts $A$ being $(k, \ell)$-sum-free.
\\

Next, consider the case where $H$ is an open subgroup of $G$.  By Lemma \ref{lem:subgroup}, we have $H=\mathbb{I} \times N$ for some subgroup $N \subseteq M$.  In particular, $\mathbb{I} \subseteq H$.  Thus, $\ell A$ is a union of cosets of $\mathbb{I}$: if $g \in (m+\mathbb{I})\cap (\ell A)$, then $\mathbb{I}+\{g\}=m+\mathbb{I}\subseteq H+(\ell A)=\ell A$.  Let $$P=\{p \in M: (p+\mathbb{I})\cap A \neq \emptyset\}$$ be the set of the elements of $M$ whose corresponding components of $G$ have nonempty intersection with $A$.  Since $\frac{|P|}{|M|} \geq \mu(A) >\lambda_{k, \ell}(M)$, we have $k P \cap \ell P \neq \emptyset$.  Then there exist $p_1, \ldots, p_{k+\ell} \in P$ and $m \in M$ such that $$p_1+\cdots+p_k=p_{k+1}+\cdots+p_{k+\ell}=m.$$  So there also exist $a_1, \ldots, a_{k+\ell} \in A$ with each $a_i \in p_i+\mathbb{I}$.  Then $a_{k+1}+\cdots +a_{k+\ell} \in (m+\mathbb{I}) \cap (\ell A)$, and we conclude that $m+\mathbb{I} \subseteq \ell A$.  Similarly, $a_1+\cdots +a_k \in (m+\mathbb{I})\cap (k A)$, which contradicts $A$ being $(k, \ell)$-sum-free.  This completes the proof.
\end{proof}

Next, we establish lower bounds through specific constructions of large $(k, \ell)$-sum-free sets.  The following lemma generalizes a common tool in the study of $(k, \ell)$-sum-free sets in finite groups.

\begin{lemma}\label{lem:quotient}
Fix any positive integers $1 \leq k<\ell$ and any compact abelian group $G$.  If $G$ admits a surjective measurable homomorphism $\phi$ onto the topological group $H$, then $\lambda_{k, \ell}(H) \leq \lambda_{k, \ell} (G)$.
\end{lemma}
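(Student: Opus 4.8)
The plan is to pull back a near-extremal $(k,\ell)$-sum-free subset of $H$ to $G$ along $\phi$ and check that this operation preserves both measure and the sum-free property. Concretely, I would fix $\varepsilon>0$, choose a measurable $B\subseteq H$ with $kB\cap \ell B=\emptyset$ and $\mu_H(B)>\lambda_{k,\ell}(H)-\varepsilon$, and set $A=\phi^{-1}(B)\subseteq G$. Since $\phi$ is measurable, $A$ is measurable, so $A$ is a legitimate candidate in the supremum defining $\lambda_{k,\ell}(G)$.

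The first thing to verify is $\mu_G(A)=\mu_H(B)$. For this I would argue that the pushforward $\phi_\ast \mu_G$ is a translation-invariant Borel probability measure on $H$: given $h\in H$, surjectivity supplies $g\in G$ with $\phi(g)=h$, and then $\phi^{-1}(h+E)=g+\phi^{-1}(E)$ for every measurable $E\subseteq H$, so $\phi_\ast\mu_G(h+E)=\mu_G(g+\phi^{-1}(E))=\mu_G(\phi^{-1}(E))=\phi_\ast\mu_G(E)$ by translation-invariance of $\mu_G$. By uniqueness of Haar probability measure, $\phi_\ast\mu_G=\mu_H$, and hence $\mu_G(A)=\mu_G(\phi^{-1}(B))=\mu_H(B)$.

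Next I would check that $A$ is $(k,\ell)$-sum-free. Suppose it is not; then there exist $a_1,\dots,a_{k+\ell}\in A$ with $a_1+\cdots+a_k=a_{k+1}+\cdots+a_{k+\ell}$. Applying the homomorphism $\phi$ and setting $b_i=\phi(a_i)\in B$ gives $b_1+\cdots+b_k=b_{k+1}+\cdots+b_{k+\ell}$, so this common value lies in $kB\cap \ell B$, contradicting $kB\cap\ell B=\emptyset$. Therefore $A$ is a measurable $(k,\ell)$-sum-free subset of $G$ with $\mu_G(A)>\lambda_{k,\ell}(H)-\varepsilon$, whence $\lambda_{k,\ell}(G)>\lambda_{k,\ell}(H)-\varepsilon$; letting $\varepsilon\to 0$ finishes the proof.

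The combinatorial core — the sum-free property transferring through a homomorphism — is essentially immediate, so the only place that needs care is the measure-theoretic bookkeeping in the first step: that a measurable map pulls measurable sets back to measurable sets, and that a surjective homomorphism carries Haar measure to Haar measure. I expect this step, resting on the uniqueness of Haar probability measure, to be the main (though mild) obstacle; everything else is formal.
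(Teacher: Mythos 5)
Your proposal is correct and follows the same route as the paper: pull back a (near-)extremal $(k,\ell)$-sum-free set $B\subseteq H$ to $A=\phi^{-1}(B)$, note that the sum-free property transfers because $\phi$ is a homomorphism, and that the density is preserved because $\phi$ pushes Haar measure to Haar measure. The paper states this in two lines without the $\varepsilon$-bookkeeping or the uniqueness-of-Haar-measure justification, so your write-up is simply a more detailed version of the same argument.
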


\begin{proof}
 Let $S \subset H$ be a $(k, \ell)$-sum-free set of density $\mu$.  Then $A=\phi^{-1}(S) \subset G$ is a $(k, \ell)$-sum-free set ($\phi(kA)$ and $\phi(\ell A)$ are disjoint in $H$) with the same density.
\end{proof}

Lower bounds now follow from the obvious choices for $H$.

\begin{lemma}\label{lem:lowerbound}
For any positive integers $1 \leq k<\ell$ and any compact abelian group $G=\mathbb{I} \times M$, we have $$\max \left\{ \lambda_{k, \ell}(M), \lambda_{k, \ell}(\mathbb{I}) \right\} \leq \lambda_{k, \ell}(G).$$
\end{lemma}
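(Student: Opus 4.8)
The plan is to derive this directly from Lemma \ref{lem:quotient} by feeding in the two coordinate projections of $G=\mathbb{I}\times M$ as the surjective measurable homomorphisms. First I would apply Lemma \ref{lem:quotient} with $\phi=\pi_M\colon \mathbb{I}\times M\to M$, the projection onto the second factor: this is a continuous (hence measurable) surjective homomorphism of compact abelian groups, so the lemma gives $\lambda_{k,\ell}(M)\le\lambda_{k,\ell}(G)$. Then I would apply it a second time with $\phi=\pi_{\mathbb{I}}\colon \mathbb{I}\times M\to\mathbb{I}$, the projection onto the first factor, which is similarly a continuous surjective homomorphism, yielding $\lambda_{k,\ell}(\mathbb{I})\le\lambda_{k,\ell}(G)$. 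Taking the maximum of the two inequalities is exactly the asserted bound.

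Unwinding the argument, this amounts to the explicit constructions $A=S\times\mathbb{I}$ from a $(k,\ell)$-sum-free set $S\subseteq M$ and $A=M\times T$ from a $(k,\ell)$-sum-free set $T\subseteq\mathbb{I}$; in each case the $k$-fold and $\ell$-fold sumsets are the corresponding products $(kS)\times\mathbb{I}$ and $(\ell S)\times\mathbb{I}$ (resp.\ $M\times(kT)$ and $M\times(\ell T)$), which are disjoint precisely because the sumsets of $S$ (resp.\ of $T$) are, and the density is preserved because Haar measure on $\mathbb{I}\times M$ is the product of the Haar measures.

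I do not anticipate a genuine obstacle here. The only hypotheses of Lemma \ref{lem:quotient} that need checking are that each projection is measurable and surjective onto a compact topological group and that pulling a set back along it preserves density, all of which are routine: projections are continuous and land in $M$ and $\mathbb{I}$, which are themselves compact abelian groups, and a continuous surjective homomorphism of compact groups pushes Haar measure forward to Haar measure. In short, the statement is an immediate corollary of Lemma \ref{lem:quotient}; combined with Theorem \ref{thm:upperbound}, it reduces the Main Theorem to the evaluation $\lambda_{k,\ell}(\mathbb{I})=\tfrac{1}{k+\ell}$ for nontrivial $\mathbb{I}$, which is what remains to be established.
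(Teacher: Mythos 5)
Your proposal is correct and matches the paper's proof exactly: the paper also applies Lemma \ref{lem:quotient} to the two coordinate projections onto $M$ and $\mathbb{I}$. Your unwinding of the construction ($A=\phi^{-1}(S)$ as a product set) is a faithful elaboration of the same argument.
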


\begin{proof}

Apply Lemma \ref{lem:quotient} with $H=M$ and $H=\mathbb{I}$.
\end{proof}

When $\mathbb{I}$ is nontrivial, we can bound $\lambda_{k, \ell}(\mathbb{I})$ from below using Pontryagin duality.

\begin{lemma}\label{lem:character}
For any positive integers $1 \leq k < \ell$ and any nontrivial compact connected abelian group $G$, we have
$$\lambda_{k, \ell}(G) \geq \frac{1}{k+\ell}.$$
\end{lemma}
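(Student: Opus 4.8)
The plan is to reduce the statement to the single case $G=\mathbb{T}$ and then to write down an explicit sum-free arc. First I would invoke Pontryagin duality: since $G$ is connected, its dual $\widehat{G}$ is a torsion-free discrete abelian group, and since $G$ is nontrivial, $\widehat{G}$ contains an element $\chi$ of infinite order. The image $\chi(G)$ is a continuous image of a compact group, hence a closed subgroup of $\mathbb{T}$; a closed subgroup of $\mathbb{T}$ is either finite or all of $\mathbb{T}$, and it cannot be finite because $\chi$ has infinite order. Thus $\chi\colon G\to\mathbb{T}$ is a surjective continuous (in particular, measurable) homomorphism, so Lemma~\ref{lem:quotient} gives $\lambda_{k,\ell}(G)\geq\lambda_{k,\ell}(\mathbb{T})$. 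It therefore suffices to prove $\lambda_{k,\ell}(\mathbb{T})\geq\frac{1}{k+\ell}$.

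For the circle I would exhibit a $(k,\ell)$-sum-free arc of measure exactly $\frac{1}{k+\ell}$. Set $c=\frac{1}{2(\ell-k)}$ and let $A$ be the open arc of length $\frac{1}{k+\ell}$ centered at $c$, that is, $A=\left(c-\frac{1}{2(k+\ell)},\,c+\frac{1}{2(k+\ell)}\right)$. Since $k+\ell\geq 3$ and $k,\ell<k+\ell$, the iterated sumsets never wrap all the way around the circle, so $kA$ is the open arc of length $\frac{k}{k+\ell}$ centered at $kc$ and $\ell A$ is the open arc of length $\frac{\ell}{k+\ell}$ centered at $\ell c$. Their lengths sum to $1$, and the choice of $c$ forces $\ell c-kc=(\ell-k)c=\frac12$, so the two centers are antipodal; a direct computation then shows that $\ell A$ is exactly the complement in $\mathbb{T}$ of the closed arc $[\,kc-\frac{k}{2(k+\ell)},\,kc+\frac{k}{2(k+\ell)}\,]$, which contains $kA$. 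Hence $kA\cap\ell A=\emptyset$, so $A$ is $(k,\ell)$-sum-free with $\mu(A)=\frac{1}{k+\ell}$, as desired.

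The arithmetic in the second step is routine; the only points needing care are verifying that the sumsets stay within a single unwrapped arc (which is where $k+\ell\geq 3$ enters) and that the offset between the centers of $kA$ and $\ell A$ is precisely half the circle, which is exactly what the value $c=\frac{1}{2(\ell-k)}$ is engineered to guarantee, so that two arcs of complementary lengths fit together with no overlap. I expect the duality step — specifically the passage from ``nontrivial and connected'' to ``admits a surjection onto $\mathbb{T}$'' — to be the part most worth stating carefully, although it is a standard consequence of the torsion-freeness of the dual of a connected group together with the classification of closed subgroups of $\mathbb{T}$.
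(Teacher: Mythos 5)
Your proposal is correct and follows essentially the same route as the paper: reduce to $\mathbb{T}$ via a surjective character (guaranteed by torsion-freeness of the dual of a nontrivial connected group), then exhibit an explicit sum-free arc of length $\frac{1}{k+\ell}$. In fact your arc centered at $c=\frac{1}{2(\ell-k)}$ is literally the paper's interval $\left(\frac{k}{\ell^2-k^2},\frac{\ell}{\ell^2-k^2}\right)$, just described by its center and the antipodal placement of $kc$ and $\ell c$ rather than by explicit endpoints.
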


\begin{proof}
Consider the Pontryagin dual $\hat{G}$ (the group of characters of $G$).  It is well known that a compact abelian group is connected if and only if its Pontryagin dual is torsion-free.  Thus, $\hat{G}$ is torsion-free, and it is nontrivial since $G$ is nontrivial.  Let $\chi \in \hat{G}$ be an element of infinite order.  Since $\chi(G)$ is closed and dense in $\mathbb{T}$, we conclude that $\chi(G)=\mathbb{T}$.  By Lemma \ref{lem:quotient}, we have $\lambda_{k,\ell}(\mathbb{T})\leq \lambda_{k,\ell}(G)$.  Finally, note that the set $$S=\left( \frac{k}{\ell^2-k^2}, \frac{\ell}{\ell^2-k^2} \right)\subset \mathbb{T}$$ is $(k, \ell)$-sum-free (with measure $\frac{1}{k+\ell}$): the intervals $kS=\left( \frac{k^2}{\ell^2-k^2}, \frac{k \ell}{\ell^2-k^2} \right)$ and $\ell S=\left( \frac{k \ell}{\ell^2-k^2}, \frac{\ell^2}{\ell^2-k^2} \right)$ are disjoint in $\mathbb{T}$.
\end{proof}

At last, we show how these results imply the Main Theorem.

\begin{proof}[Proof of the Main Theorem]
We condition on whether or not $\mathbb{I}$ is trivial.  If $\mathbb{I}$ is trivial, then $\lambda_{k,\ell}(\mathbb{I})=0$ and $G$ is isomorphic to $M$.  The first statement of the Main Theorem in this case can now be seen to be tautological.  If $\mathbb{I}$ is nontrivial, then it suffices to observe that the upper bound of Theorem \ref{thm:upperbound} equals the lower bound of Lemma \ref{lem:lowerbound} (using Lemma \ref{lem:character}).
\end{proof}

\section{Discussion}

The Main Theorem bounds $\lambda_{k,\ell}(G)$ in terms of the largest possible $(k,\ell)$-sum-free sets of its ``connected'' and ``discrete'' factors.  In the finite case (cf. Theorem \ref{thm:bajnokmatzke}), one must ordinarily take into consideration the largest $(k,\ell)$-sum-free sets in all subgroups; our Main Theorem shows that for compact $G=\mathbb{I} \times M$, it suffices to look for $(k,\ell)$-sum-free sets in only $\mathbb{I}$ and $M$.
\\

The Main Theorem completely treats $\lambda_{k,\ell}(\mathbb{I})$, but there are still many interesting questions to ask about $\lambda_{k,\ell}(M)$ when $M$ is infinite.  (Recall that the totally disconnected compact groups are the profinite groups.)  Consider, for instance, the case where $M$ is the direct product of countably many finite cyclic groups: $$M=(\mathbb{Z}_2)^{e_2} \times (\mathbb{Z}_3)^{e_3} \times (\mathbb{Z}_4)^{e_4} \times \cdots$$ (where the $e_i$'s are either finite or $\infty$).  Roughly speaking, the measurable subsets of $M$ can be approximated by subsets of the form $S \times (M/H)$, where $H$ is a finite subgroup of $M$ and $S \subseteq H$, so we expect $\lambda_{k,\ell}(M)=\sup \{ \lambda_{k,\ell}(H) \}$ (where $H$ ranges over the finite subgroups of $M$).  As a starting point, Theorem \ref{thm:bajnokmatzke} provides lower bounds.  When $k=1$ and $\ell=2$, we can also apply Theorem \ref{thm:greenruzsa}: for example, $\lambda_{1,2}((\mathbb{Z}_p)^{\infty})=\left\lceil \frac{p-1}{3} \right\rceil \cdot \frac{1}{p}$.
\\

The problem of finding $(k, \ell)$-sum-free subsets when $\mathbb{I}$ is a $d$-dimensional torus and $M$ is finite motivates a set of related questions for finite abelian groups.  Consider the maps $\phi_n: \mathbb{T} \to \mathbb{Z}_n$ via $$A \mapsto \left\{i \in \mathbb{Z}_n: \left( \frac{i}{n}, \frac{i+1}{n} \right) \subseteq A \right\}.$$   The fact $\left( \frac{i}{n}, \frac{i+1}{n} \right)+\left( \frac{j}{n}, \frac{j+1}{n} \right)=\left( \frac{i+j}{n}, \frac{i+j+2}{n} \right)$ implies that for any sets $A, B \subseteq \mathbb{T}$, $$\{0,1\}+\phi_n(A)+\phi_n(B) \subseteq \phi_{n}(A+B),$$ with equality when (but not only when) $A$ and $B$ are the unions of open intervals of the form $\left( \frac{i}{n}, \frac{i+1}{n} \right)$.  (For instance, any open sets can be arbitrarily well approximated from the inside in this manner for large enough $n$.)  This motivates the following set of definitions.
\\

Let $A,B,C$ be subsets of a finite abelian group $G$.  For lack of better notation, let $A \ast_C B=C+A+B$ and $k \ast_C A=\underbrace{A\ast_C \cdots \ast_C A}_k=kA+(k-1)C$.  We can investigate $(k,\ell)$-sum-free sets under this operation (i.e., with respect to fixed $C$) by defining
$$\lambda^C_{k,\ell}(G)=\max \left\{ \frac{|A|}{|G|}: (k \ast_C A)\cap (\ell \ast_C A)=\emptyset  \right\}.$$
Of course, $\lambda^{\{0\}}_{k,\ell}(G)=\lambda_{k,\ell}(G)$ recovers the ordinary definition of the maximum size of a $(k,\ell)$-sum-free set.
\\

When $G=\mathbb{Z}_n$ and $C=\{0,1\}$, we see that $\lambda^{\{0,1\}}_{k,\ell}(\mathbb{Z}_n)$ reflects the problem of finding open $(k,\ell)$-sum-free subsets of $\mathbb{T}$.  The Main Theorem for $G=\mathbb{T}$ gives $\lambda^{\{0,1\}}_{k,\ell}(\mathbb{Z}_n)\leq \frac{1}{k+\ell}$.  (This bound also follows from Kneser's Theorem for finite groups.)  Note that equality is achieved at least whenever $n$ is a multiple of $\ell^2-k^2$ (Theorem \ref{thm:lowerbound}).  This group invariant seems an interesting object of study.

\begin{question}
What can we say about $\lambda^{\{0,1\}}_{k,\ell}(\mathbb{Z}_n)$?  Which values of $n$ satisfy $\lambda^{\{0,1\}}_{k,\ell}(\mathbb{Z}_n)=\frac{1}{k+\ell}$?  For fixed $1\leq k <\ell$, which $n$ minimizes $\lambda^{\{0,1\}}_{k,\ell}(\mathbb{Z}_n)$?
\end{question}

Other compact abelian groups of the form $\mathbb{T}^d \times M$ (with $M$ finite) analogously give rise to the more general problem of computing $\lambda_{k,\ell}^C(G)$ with $G=\mathbb{Z}_{n_1}\times \cdots \times \mathbb{Z}_{n_d} \times M$ and $C=\{(\varepsilon_1, \varepsilon_2, \ldots, \varepsilon_d, 0): \varepsilon_i \in \{0,1\}\}$.  Finally, we propose that other choices of $C$ could lead to questions of future interest.

\section{Acknowledgments}
The author wishes to thank Milo Brandt and Stefan Steinerberger for reading drafts of this paper.  The author is also grateful to Sean Eberhard for suggesting the use of Pontryagin duality in the connected case.

\end{document}